\begin{document}

\title{Non-uniform dependence on periodic initial data for the two-component Fornberg-Whitham system in Besov spaces}
\author{
{Prerona Dutta} 
{\thanks{Affiliation during this project - Department of Mathematics, The Ohio State University
}\thanks{Current affiliation - Department of Mathematics, Xavier University of Louisiana : pdutta@xula.edu}}{, Barbara Lee Keyfitz}
\thanks{Affiliation - Department of Mathematics, The Ohio State University : keyfitz.2@osu.edu
}
}
\markboth{Dutta, Keyfitz}{Fornberg-Whitham System in Besov spaces}

\maketitle

\begin{abstract}
This paper establishes non-uniform continuity of the data-to-solution map in the periodic case, for the two-component Fornberg-Whitham system in Besov spaces $B^s_{p,r}(\mathbb{T}) \times B^{s-1}_{p,r}(\mathbb{T})$ for $s> \max\{2+\frac{1}{p}, \frac{5}{2}\}$. In particular, when $p=2$ and $r=2$, this proves the non-uniform dependence on initial data for the system in Sobolev spaces $H^s(\mathbb{T})\times H^{s-1}(\mathbb{T})$ for $s> \frac{5}{2}$.\\

\noindent
{\sc Keywords.} Fornberg-Whitham system, Besov space, non-uniform dependence.\\

\noindent
{\sc AMS subject classification.} Primary: 35Q35 ; Secondary: 35B30.
\end{abstract}

\begin{center}  
{\em \today}\end{center}

\section{Introduction}\label{sec1}

In this paper, we consider the following two-component Fornberg-Whitham (FW) system for a fluid
\begin{equation}\label{FW0}
\begin{cases}
u_t + uu_x = \left(1-\partial_x^2\right)^{-1}\partial_x\left(\rho - u\right)\\
\rho_t + \left(\rho u\right)_x = 0\\
\left(u, \rho\right)(0, x) = \left(u_0, \rho_0\right)(x)
\end{cases}
\end{equation}
where $x\in\mathbb{T} = \mathbb{R}/2\pi\mathbb{Z}$, $t\in\mathbb{R}^{+}$. Here, $u=u(x,t)$ is the horizontal velocity of the fluid and $\rho = \rho(x,t)$ is the height of the fluid surface above a horizontal bottom. This system was first proposed in \cite{FTYY}, and local well-posedness and non-uniform dependence on the initial data were established in Sobolev spaces $H^s(\mathbb{R}) \times H^{s-1}(\mathbb{R})$ for $s>\frac{3}{2}$ in \cite{XZL, YL}.
\medskip

\noindent Local well-posedness in Besov spaces $B^s_{p,r}(\mathbb{R}) \times B^{s-1}_{p,r}(\mathbb{R})$ of \eqref{FW0} was established in \cite{D} for $s > \max\{2+\frac{1}{p}, \frac{5}{2}\}$. Besov spaces $B^s_{p,r}$ are a class of functions of interest in the study of nonlinear partial differential equations, as they are based on Sobolev spaces and introduce a measure of generalized H\"{o}lder regularity through the index $r$, along with the Sobolev index of differentiability $s$ and the Lebesgue integrability index $p$. If $s$ and $p$ are fixed, the spaces $B^s_{p,r}$ grow larger with increasing $r$. In \cite{D}, the FW system was shown to be well-posed in the sense of Hadamard, by establishing existence and uniqueness of the solution to the system \eqref{FW0} and then proving continuity of the data-to-solution map when the initial data belong to $B^s_{p,r}(\mathbb{R}) \times B^{s-1}_{p,r}(\mathbb{R})$ for $s > \max\{2+\frac{1}{p}, \frac{5}{2}\}$.
\medskip

\noindent In this paper, our objective is to prove nonuniform dependence on periodic initial data for the two-component FW system \eqref{FW0} in $B^s_{p,r}(\mathbb{T}) \times B^{s-1}_{p,r}(\mathbb{T})$ for $s> \max\{2+\frac{1}{p}, \frac{5}{2}\}$. We work with periodic initial data as that simplifies our choice of approximate solutions and the resulting estimates. Setting $\Lambda = 1-\partial_x^2$, we rewrite \eqref{FW0} as
\begin{equation}\label{FW}
\begin{cases}
u_t + uu_x = \Lambda^{-1}\partial_x\left(\rho - u\right)\\
\rho_t + u\rho_x + \rho u_x = 0\\
\left(u, \rho\right)(0, x) = \left(u_0, \rho_0\right)(x)
\end{cases}
\end{equation}
where $x\in\mathbb{T} = \mathbb{R}/2\pi\mathbb{Z}$ and $t\in\mathbb{R}^{+}$.
\medskip

\noindent The paper is organized as follows. In Section \ref{sec2}, we recall standard definitions and properties of Besov spaces, linear transport equations, the operator $\Lambda$ and the two-component FW system. In Section \ref{sec3}, we prove non-uniform dependence on initial data for the FW system \eqref{FW} when the initial data belong to $B^s_{p,r}(\mathbb{T}) \times B^{s-1}_{p,r}(\mathbb{T})$ for $s> \max\{2+\frac{1}{p}, \frac{5}{2}\}$. For this proof, we use a technique previously seen in the study of non-uniform continuity of data-to-solution maps for other nonlinear PDEs, for instance in \cite{HM, HT, HTi, KT, YL}. We construct two sequences of approximate solutions such that the initial data for these sequences converge to each other in $B^s_{p,r}(\mathbb{T}) \times B^{s-1}_{p,r}(\mathbb{T})$. Non-uniform convergence is then established by proving that the approximate and hence the exact solutions remain bounded away from each other for any positive time $t>0$. This idea was first explored by Kato in \cite{K} to show that the data-to-solution map for Burgers' equation is not H\"{o}lder continuous in the $H^s$ norm with $s > 3/2$ for any H\"{o}lder exponent.

\section{Notation and Preliminaries}\label{sec2}

This section is a review of relevant definitions and results on Besov spaces, linear transport equations, the operator $\Lambda$ and the two-component FW system \eqref{FW}. We begin by listing some useful notation to be used throughout Section 3.

\subsection{Notation}
For any $x,y \in \R$,
\begin{itemize}
\item $x\lesssim y$ denotes $x\leq \alpha y$ for some constant $\alpha$.
\item $x\approx y$ denotes $x = \beta y$ for some constant $\beta$.
\item $x\gtrsim y$ denotes $x\geq \gamma y$ for some constant $\gamma$.
\end{itemize}
\subsection{Besov spaces}

We recall the construction of a dyadic partition of unity from \cite{HTi}. Consider a smooth bump function $\chi$ such that $\mathrm{supp}~ \chi = [-\frac{4}{3}, \frac{4}{3}]$ and $\chi = 1$ on $[-\frac{3}{4}, \frac{3}{4}]$. For $\xi >0$, set $\varphi_{-1}(\xi) = \chi$, $\varphi_0(\xi) =  \chi\left(\frac{\xi}{2}\right) - \chi(\xi)$ and $\varphi_q(\xi) = \varphi_0(2^{-q}\xi)$. Then, $\mathrm{supp}~\varphi_q = [\frac{3}{4}\cdot 2^q, \frac{8}{3}\cdot 2^q]$ and $\sum\limits_{q\geq -1}\varphi_q(\xi) = 1$. Using this partition, a Littlewood-Paley decomposition of any periodic distribution $u$ is defined in \cite{D2} as follows.

\begin{definition}[Littlewood-Paley decomposition]\label{defnbesov}
For any $u\in\mathcal{D}'(\mathbb{T})$ with Fourier series $u(x) = \sum\limits_{j\in\mathbb{Z}} \hat{u}_j e^{ijx}$ where $\hat{u}_j = \frac{1}{2\pi}\int\limits_0^{2\pi}e^{-ijy}u(y)~dy$, its Littlewood-Paley decomposition is given by $u = \sum\limits_{q\geq -1} \Delta_q u$, where $\Delta_q u$ are periodic dyadic blocks defined for all $q\in\mathbb{Z}$ as $$\Delta_q u = \sum\limits_{j\in\mathbb{Z}}\varphi_q(j)\hat{u}_j e^{ijx}~.$$
\end{definition}

\noindent Using this Littlewood-Paley decomposition, Besov spaces on $\mathbb{T}$ are defined in \cite{D2} as follows.
\begin{definition}[Besov spaces]\label{besovdefn}
Let $s\in \R$ and $p$, $r$ $\in [1, \infty]$. Then the Besov spaces of functions are defined as
$$B^s_{p,r} \equiv B^s_{p,r} (\mathbb{T}) = \{u \in \mathcal{D}'(\mathbb{T})~\big|~ \|u\|_{B^s_{p,r} } < \infty\}~,$$
where
$$
\|u\|_{B^s_{p,r} } = 
\begin{cases}
\left(\sum\limits_{q\geq -1}(2^{sq}\|\Delta_q u\|_{{\bf L}^p})^r\right)^{1/r}~~ \text{if}~ 1\leq r < \infty\\ 
\sup\limits_{q\geq -1} 2^{sq}\|\Delta_q u\|_{{\bf L}^p}~~~~~~~~~~~~~ \text{if}~ r = \infty
\end{cases}
~.
$$
\end{definition}
\noindent Following are some properties proved in \cite[Section 2.8]{BCD} and \cite[Section 1.3]{D2} that facilitate the study of nonlinear partial differential equations in Besov spaces.
\begin{lemma}\label{besov}
Let $s, s_j \in \R$ for $j=1,2$ and $1\leq p, r \leq \infty$. Then the following hold:
\begin{enumerate}
\item[(1)] Topological property: $B^s_{p,r}$ is a Banach space which is continuously embedded in $\mathcal{D}'(\mathbb{T})$.
\item[(2)] Algebraic property: For all $s>0$, $B^s_{p,r} \cap {\bf L}^{\infty}$ is a Banach algebra.
\item[(3)] Interpolation: If $f \in B^{s_1}_{p,r} \cap B^{s_2}_{p,r}$ and $\theta \in [0,1]$, then $f \in B^{\theta s_1 + (1-\theta) s_2}_{p,r}$ and 
$$\|f\|_{B^{\theta s_1 + (1-\theta) s_2}_{p,r}} \leq  \|f\|^{\theta}_{B^{s_1}_{p,r}} \|f\|^{1-\theta}_{B^{s_2}_{p,r}} ~.$$
\item[(4)] Embedding: $B^{s_1}_{p,r} \hookrightarrow B^{s_2}_{p,r}$ whenever $s_1 \geq s_2$. In particular, $B^{s}_{p,r} \hookrightarrow B^{s-1}_{p,r}$ for all $s \in \R$.
\end{enumerate}
\end{lemma}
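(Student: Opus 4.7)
The plan is to verify each of the four properties separately, since they are largely independent consequences of the Littlewood--Paley characterization given in Definition \ref{defnbesov}. The main tools are Bernstein-type inequalities for functions with Fourier support in a dyadic annulus, H\"older's inequality applied at both the ${\bf L}^p$ and $\ell^r$ levels, and, for the algebra property, Bony's paraproduct decomposition.

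For property~(1), I first check the norm axioms: homogeneity is inherited from linearity of each $\Delta_q$, and the triangle inequality follows by Minkowski in ${\bf L}^p$ and then in $\ell^r$. For completeness, given a Cauchy sequence $\{u_n\}$ in $B^s_{p,r}$, each $\Delta_q u_n$ is Cauchy in ${\bf L}^p$ with some limit $w_q$; one then sets $u = \sum_{q\geq -1} w_q$, shows this converges in $\mathcal{D}'(\mathbb{T})$, and applies Fatou's lemma to the $\ell^r$ sum to conclude $u\in B^s_{p,r}$ with $u_n\to u$. For continuous embedding into $\mathcal{D}'(\mathbb{T})$, pair $u$ with a smooth test function $\varphi$ and write $\langle \Delta_q u,\varphi\rangle = \langle \tilde\Delta_q u,\Delta_q\varphi\rangle$ for a fattened Littlewood--Paley block $\tilde\Delta_q$; the rapid decay of $\|\Delta_q\varphi\|_{{\bf L}^{p'}}$ then controls the pairing by a multiple of $\|u\|_{B^s_{p,r}}$.

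Properties~(3) and~(4) are short. For (3), write
\begin{equation*}
2^{(\theta s_1 + (1-\theta)s_2)q}\|\Delta_q u\|_{{\bf L}^p} = \bigl(2^{s_1 q}\|\Delta_q u\|_{{\bf L}^p}\bigr)^{\theta}\bigl(2^{s_2 q}\|\Delta_q u\|_{{\bf L}^p}\bigr)^{1-\theta},
\end{equation*}
and apply H\"older to the $\ell^r$ sum with conjugate exponents $1/\theta$ and $1/(1-\theta)$. For (4), compare term by term: for $q\geq 0$ one has $2^{s_2 q}\leq 2^{s_1 q}$, while the single low-frequency contribution $2^{-s_2}\|\Delta_{-1}u\|_{{\bf L}^p}$ is at most $2^{s_1-s_2}$ times the corresponding weight in the $B^{s_1}_{p,r}$ norm.

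Property~(2) is the main obstacle. Here I use Bony's decomposition
\begin{equation*}
fg = T_f g + T_g f + R(f,g),
\end{equation*}
where $T_f g = \sum_{q} S_{q-1}f\,\Delta_q g$ is the paraproduct, with $S_{q-1} = \sum_{k\leq q-2}\Delta_k$, and $R(f,g)=\sum_{|q-q'|\leq 1}\Delta_q f\,\Delta_{q'} g$ is the remainder. Each paraproduct is bounded using that $S_{q-1}$ is uniformly bounded on ${\bf L}^\infty$ and that the Fourier supports of the dyadic pieces remain essentially orthogonal, yielding $\|T_f g\|_{B^s_{p,r}}\lesssim \|f\|_{{\bf L}^\infty}\|g\|_{B^s_{p,r}}$ and symmetrically for $T_g f$. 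The delicate step is the remainder: Bernstein's inequality repositions derivatives between the two factors, and it is precisely the assumption $s>0$ that forces the resulting geometric series in $q$ to converge. Summing the three bounds gives $\|fg\|_{B^s_{p,r}}\lesssim \|f\|_{{\bf L}^\infty}\|g\|_{B^s_{p,r}} + \|g\|_{{\bf L}^\infty}\|f\|_{B^s_{p,r}}$, which is the Banach algebra estimate on $B^s_{p,r}\cap {\bf L}^\infty$.
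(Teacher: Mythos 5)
Your sketch is correct, and it is essentially the standard Littlewood--Paley argument from the references the paper itself cites for this lemma (\cite[Section 2.8]{BCD}, \cite[Section 1.3]{D2}); the paper offers no proof of its own, so there is nothing different to compare against. One minor remark: in the remainder estimate for (2) you do not really need Bernstein's inequality --- the uniform bound $\|\Delta_q f\|_{{\bf L}^\infty}\lesssim \|f\|_{{\bf L}^\infty}$ together with the fact that $\Delta_j R(f,g)$ only picks up terms with $q\gtrsim j$ already yields the geometric series $\sum_{q\geq j-N_0}2^{(j-q)s}$, whose convergence is exactly where $s>0$ enters.
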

{\bf Remark on (2) in Lemma \ref{besov}:} When $s> \frac{1}{p}$ (or $s\geq \frac{1}{p}$ and $r=1$), $B^s_{p,r} \hookrightarrow {\bf L}^{\infty}$. We will use the fact that for $0<s<\frac{1}{p}$, the result is still true as long as the functions are bounded. 

\subsection{Linear transport equation}

Given a linear transport equation, Proposition A.1 in \cite{D1} proves the following estimate for its solution size in Besov spaces.
\begin{proposition}\label{danchinLT}
Consider the linear transport equation
\begin{equation}\label{lineartrans}
\begin{cases}
\partial_t f + v \partial_x f = F \\
f(x,0) = f_0(x)
\end{cases}
\end{equation}
where $f_0 \in B^s_{p,r}(\mathbb{T})$, $F \in {\bf L}^1((0, T); B^{s}_{p,r}(\mathbb{T}))$ and $v$ is such that $\partial_x v \in {\bf L}^1((0, T); B^{s-1}_{p,r}(\mathbb{T}))$. Suppose $f \in {\bf L}^{\infty}((0, T); B^{s}_{p,r}(\mathbb{T})) \cap C([0,T]; \mathcal{D}'(\mathbb{T}))$ is a solution to \eqref{lineartrans}. Let $1\leq p, r \leq \infty$. If either $s\neq 1+\frac{1}{p}$, or $s= 1+\frac{1}{p}$ and $r=1$, then for a positive constant $C$ which depends on $s$, $p$ and $r$, we have
\begin{equation*}\label{transestimate}
\|f(t)\|_{B^s_{p,r}}~\leq~ e^{CV(t)}\left(\|f_0\|_{B^s_{p,r}} + C\int_0^t  e^{-CV(\tau)}\|F(\tau)\|_{B^s_{p,r}}~d\tau\right)
\end{equation*}
where
$$V(t) = \int_0^t \|\partial_x v(\tau)\|_{B^{1/p}_{p,r}\cap {\bf L}^{\infty}}~d\tau~~~~ \textrm{if}~s < 1+\frac{1}{p}$$
and
$$V(t) = \int_0^t \|\partial_x v(\tau)\|_{B^{s-1}_{p,r}}~d\tau~~~~ \textrm{otherwise}~.$$
For $r<\infty$, $f\in C([0,T], B^{s}_{p,r}(\mathbb{T}))$ and if $r=\infty$, then $f\in C([0,T], B^{s'}_{p,1}(\mathbb{T}))$ for all $s'<s$.
\end{proposition}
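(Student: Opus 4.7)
The plan is to reduce the estimate to a block-by-block energy inequality and then reassemble via the Besov norm. First I would localize the equation in frequency by applying the dyadic operator $\Delta_q$ to \eqref{lineartrans}, obtaining
\begin{equation*}
\partial_t \Delta_q f + v \, \partial_x \Delta_q f = \Delta_q F + R_q, \qquad R_q := v \, \partial_x \Delta_q f - \Delta_q(v \, \partial_x f),
\end{equation*}
so that the nonlocal difficulty is packaged into the commutator term $R_q$. Testing this against $|\Delta_q f|^{p-2}\Delta_q f$ and integrating by parts in the transport term (which produces a harmless factor $\tfrac{1}{p}\|\partial_x v\|_{\mathbf{L}^\infty}\|\Delta_q f\|_{\mathbf{L}^p}$), I would arrive at the differential inequality
\begin{equation*}
\frac{d}{dt}\|\Delta_q f(t)\|_{\mathbf{L}^p} \lesssim \|\partial_x v\|_{\mathbf{L}^\infty}\|\Delta_q f\|_{\mathbf{L}^p} + \|\Delta_q F\|_{\mathbf{L}^p} + \|R_q\|_{\mathbf{L}^p}.
\end{equation*}

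The core technical step is to control the commutator $R_q$. Using Bony's paraproduct decomposition of $v\,\partial_x f$ into the three pieces $T_v \partial_x f$, $T_{\partial_x f} v$, and $R(v,\partial_x f)$, I would estimate each piece separately on the support of $\varphi_q$. Standard paradifferential calculus then yields a bound of the form
\begin{equation*}
\|R_q\|_{\mathbf{L}^p} \;\leq\; C\, c_q\, 2^{-sq}\,\|\partial_x v\|_{X_s}\,\|f\|_{B^s_{p,r}},
\end{equation*}
where $(c_q)_{q\geq -1}$ has unit $\ell^r$-norm and $X_s = B^{1/p}_{p,r}\cap \mathbf{L}^\infty$ for $s<1+\tfrac{1}{p}$ while $X_s = B^{s-1}_{p,r}$ otherwise. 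The dichotomy is forced by the need to bound low-high terms uniformly, and the forbidden case $s = 1+\tfrac{1}{p}$ with $r>1$ is precisely where the endpoint embedding $B^{1/p}_{p,r}\hookrightarrow \mathbf{L}^\infty$ fails. This commutator estimate is the main obstacle, since it encodes all the subtleties of Littlewood--Paley theory in a single clean inequality.

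With the commutator tamed, the rest is bookkeeping. I would multiply the differential inequality by $2^{sq}$, take the $\ell^r$ norm in $q$, and use Minkowski's inequality together with the commutator bound to obtain
\begin{equation*}
\frac{d}{dt}\|f(t)\|_{B^s_{p,r}} \;\leq\; C\,\|\partial_x v(t)\|_{X_s}\,\|f(t)\|_{B^s_{p,r}} + C\,\|F(t)\|_{B^s_{p,r}}.
\end{equation*}
Applying Grönwall's inequality with $V(t) = \int_0^t \|\partial_x v(\tau)\|_{X_s}\,d\tau$ then produces the stated estimate. Finally, for the continuity in time statement, I would use the equation to show $\partial_t f \in \mathbf{L}^1((0,T); B^{s-1}_{p,r})$, hence $f\in C([0,T]; B^{s-1}_{p,r})$, and then upgrade the time regularity by a density/interpolation argument: when $r<\infty$, truncation of the Littlewood--Paley series approximates $f$ in $B^s_{p,r}$, giving the full $C([0,T]; B^s_{p,r})$ regularity; when $r=\infty$, only $C([0,T]; B^{s'}_{p,1})$ for $s'<s$ survives, since the approximation is no longer norm-convergent at the top scale.
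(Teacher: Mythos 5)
The paper does not prove this proposition at all: it is quoted verbatim from Danchin \cite{D1} (Proposition A.1; see also Bahouri--Chemin--Danchin, Theorem 3.14), and your sketch is essentially that standard proof --- frequency localization, the commutator estimate for $v\,\partial_x\Delta_q f-\Delta_q(v\,\partial_x f)$ via Bony's decomposition with exactly the stated dichotomy for $X_s$, then Gr\"onwall and the usual time-continuity argument. Your outline is correct; the only cosmetic point is that one should work with the time-integrated block inequalities and apply Minkowski before summing in $q$ (rather than literally differentiating $\|f(t)\|_{B^s_{p,r}}$ in $t$), which matters in particular when $r=\infty$.
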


\subsection{The Operator $\Lambda$}
Let $\Lambda = 1-\partial_x^2$; then for any test function $g$, the Fourier transform of $\Lambda^{-1}g$ is given by $\mathcal{F}\left(\Lambda^{-1}g\right) = \frac{1}{1+\xi^2}\hat{g}(\xi)$. Moreover, for any $s\in\R$, $\Lambda^{-1}\partial_x$ is continuous from $B^{s-1}_{p,r}$ to $B^{s}_{p,r}$; that is, for all $h\in B^{s-1}_{p,r}$, there exists a constant $\kappa > 0$ depending on $s,p$ and $r$ such that
\begin{equation}\label{contlamb}
\|\Lambda^{-1}\partial_xh\|_{B^{s}_{p,r}}~\leq~\kappa\|h\|_{B^{s-1}_{p,r}}~.
\end{equation}
 
\subsection{The Fornberg-Whitham system}
The well-posedness of the two-component FW system \eqref{FW} in Besov spaces was established on the real line in \cite{D} with the following result.
\begin{theorem}\label{existmain}
Let $s> \max\{2+\frac{1}{p}, \frac{5}{2}\}$, $p \in [1, \infty]$, $r \in [1, \infty)$ and $(u_0, \rho_0) \in B^s_{p,r}(\mathbb{R})  \times B^{s-1}_{p,r}(\mathbb{R}) $. Then the system \eqref{FW} has a unique solution $(u, \rho) \in \\C\left([0,T];B^s_{p,r}(\mathbb{R}) \times B^{s-1}_{p,r}(\mathbb{R}) \right)$ where the lifespan $T$ is given by
$$T~=~  \frac{C}{\left(\|u_0\|_{B^s_{p,r}} + \|\rho_0\|_{B^{s-1}_{p,r}}\right)^2}~,$$
with $C$ being a constant that depends on $s$, $p$ and $r$, and the solution size is estimated as 
\begin{equation*}\label{solnsize}
\|(u, \rho)\|_{B^s_{p,r}  \times B^{s-1}_{p,r}} ~\leq~ 2\left(\|u_0\|_{B^s_{p,r}} + \|\rho_0\|_{B^{s-1}_{p,r}}\right)~.
\end{equation*}
Moreover, the data-to-solution map is continuous.
\end{theorem}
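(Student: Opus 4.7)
The plan is to establish this well-posedness result via a Friedrichs iteration scheme adapted to transport equations in Besov spaces, using Proposition \ref{danchinLT} as the main workhorse. First I would linearize the system around the previous iterate: setting $(u^0,\rho^0)=(u_0,\rho_0)$, I would inductively solve
\begin{equation*}
\begin{cases}
\partial_t u^{n+1} + u^n \partial_x u^{n+1} = \Lambda^{-1}\partial_x(\rho^n - u^n), \\
\partial_t \rho^{n+1} + u^n \partial_x \rho^{n+1} = -\rho^n \partial_x u^n, \\
(u^{n+1},\rho^{n+1})(0,x)=(u_0,\rho_0)(x),
\end{cases}
\end{equation*}
each of which is a pair of linear transport equations with a coupled source. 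Since $s-1>1/p$, Lemma \ref{besov}(2) guarantees the algebra property for $B^{s-1}_{p,r}$, which is exactly what is needed to estimate the product $\rho^n\partial_x u^n$ in $B^{s-1}_{p,r}$.

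Next I would apply Proposition \ref{danchinLT} to the $u^{n+1}$-equation at regularity $s$ and to the $\rho^{n+1}$-equation at regularity $s-1$, noting that the condition $s>1+1/p$ puts us in the ``otherwise'' case of the transport estimate. Using \eqref{contlamb} to control the first forcing by $\|\rho^n-u^n\|_{B^{s-1}_{p,r}}$ and the algebra property for the second forcing, a standard induction yields the uniform bound
\begin{equation*}
\|u^n(t)\|_{B^s_{p,r}}+\|\rho^n(t)\|_{B^{s-1}_{p,r}} \;\leq\; 2\bigl(\|u_0\|_{B^s_{p,r}}+\|\rho_0\|_{B^{s-1}_{p,r}}\bigr)
\end{equation*}
on a time interval $[0,T]$ with $T\sim \bigl(\|u_0\|_{B^s_{p,r}}+\|\rho_0\|_{B^{s-1}_{p,r}}\bigr)^{-2}$. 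The quadratic denominator arises naturally because the forcing and the transport coefficient each contribute a factor of this norm inside the exponential and the time integral of the Gronwall-type estimate.

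Then I would prove convergence of $(u^n,\rho^n)$ in the weaker space $C([0,T];B^{s-1}_{p,r}\times B^{s-2}_{p,r})$. Writing the transport equations satisfied by the differences $u^{n+1}-u^n$ and $\rho^{n+1}-\rho^n$ and applying Proposition \ref{danchinLT} at one regularity lower absorbs the derivative loss coming from terms like $(u^n-u^{n-1})\partial_x u^n$, and yields a contraction on a possibly smaller time interval; a diagonal or telescoping argument then recovers convergence on the full $[0,T]$. The limit $(u,\rho)$ is then shown to lie in $L^\infty\bigl([0,T];B^s_{p,r}\times B^{s-1}_{p,r}\bigr)$ via a weak-$*$ compactness/Fatou argument from the uniform bound, after which the continuity in time assertion of Proposition \ref{danchinLT} applied to the limit equations upgrades this to membership in $C\bigl([0,T];B^s_{p,r}\times B^{s-1}_{p,r}\bigr)$.

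Finally, uniqueness would follow from essentially the same difference estimate applied to two solutions sharing initial data, combined with Gronwall, and continuous dependence of the data-to-solution map from a difference estimate at one regularity lower together with the interpolation property in Lemma \ref{besov}(3) to close the argument at the original regularity $s$. The main obstacle I anticipate is organizing the bookkeeping of the coupled a priori estimate so that the $u$-norm at regularity $s$ and the $\rho$-norm at regularity $s-1$ close simultaneously without requiring any extra smoothness; concretely, one must ensure that the $\Lambda^{-1}\partial_x u^n$ term on the right of the $u^{n+1}$-equation, which only lives in $B^s_{p,r}$, is balanced against the $\rho^n \partial_x u^n$ forcing on the right of the $\rho^{n+1}$-equation, which requires both the algebra property and the embedding $B^s_{p,r}\hookrightarrow B^{s-1}_{p,r}$ of Lemma \ref{besov}(4) to control in $B^{s-1}_{p,r}$.
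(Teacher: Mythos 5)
You should first note that the paper does not prove Theorem \ref{existmain} at all: it is quoted verbatim from \cite{D}, and the only argument given in this paper is the short proof of Corollary \ref{corexist}, which adapts the mollifier and the approximation arguments of \cite{D} to the periodic setting. Your overall scheme --- linearize around the previous iterate, apply the transport estimate of Proposition \ref{danchinLT} at levels $s$ and $s-1$ together with \eqref{contlamb} and the algebra property, obtain uniform bounds on a time interval depending only on the data norm, pass to the limit via a contraction one derivative lower plus weak compactness, and prove uniqueness by the same difference estimate --- is the standard Danchin-type route and is, in spirit, the approach of the cited work, so there is no methodological conflict to report.

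There is, however, a genuine gap in your final step. Continuity of the data-to-solution map into $\mathcal{C}([0,T];B^s_{p,r}\times B^{s-1}_{p,r})$ cannot be obtained from ``a difference estimate at one regularity lower together with the interpolation property.'' The Lipschitz estimate in $B^{s-1}_{p,r}\times B^{s-2}_{p,r}$ combined with the uniform bound in $B^s_{p,r}\times B^{s-1}_{p,r}$ and Lemma \ref{besov}(3) only gives convergence in $B^{s'}_{p,r}\times B^{s'-1}_{p,r}$ for every $s'<s$; it says nothing about the top norm, because you have no control at any regularity strictly above $s$ for general data. Closing continuity at the exact regularity $s$ requires a Bona--Smith-type argument: approximate the data by smooth (mollified) data, show the smooth solutions converge to the rough one in the top norm uniformly with respect to the data in a bounded set, and then conclude by a three-term triangle inequality --- this is precisely the ``approximation argument'' the paper alludes to when invoking \cite[Sections 3.2--3.3]{D}. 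Two smaller points: the quadratic lifespan $T\sim(\|u_0\|_{B^s_{p,r}}+\|\rho_0\|_{B^{s-1}_{p,r}})^{-2}$ is asserted rather than derived in your sketch (your a priori inequality also contains the linear forcing $\Lambda^{-1}\partial_x(\rho-u)$, so the bookkeeping that produces exactly this $T$ with the factor-$2$ solution-size bound needs to be carried out, not just attributed to ``the Gronwall-type estimate''); and in the difference estimate at level $s-2$ you should either exclude or treat separately the borderline case $s-2=1+\frac1p$ with $r>1$, which Proposition \ref{danchinLT} does not cover, e.g.\ by running the contraction at a non-integer intermediate regularity.
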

Since we work with $B^s_{p,r}(\mathbb{T})  \times B^{s-1}_{p,r}(\mathbb{T})$ in this paper, we state the following.
\begin{corollary}\label{corexist}
Theorem \ref{existmain} holds when $\mathbb{R}$ is replaced by $\mathbb{T}$.
\end{corollary}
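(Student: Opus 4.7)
The plan is to observe that the proof of Theorem \ref{existmain} given in \cite{D} for the real line transfers essentially verbatim to the periodic setting, because each analytic ingredient it relies on has a direct counterpart on $\mathbb{T}$ already recorded in Section \ref{sec2}: the periodic Littlewood-Paley decomposition (Definition \ref{defnbesov}), the Besov-space properties (Lemma \ref{besov}), the transport estimate on $\mathbb{T}$ (Proposition \ref{danchinLT}), and the mapping property \eqref{contlamb} for $\Lambda^{-1}\partial_x$. I would therefore reproduce the standard Friedrichs iteration argument on the torus.

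First, set $(u^0,\rho^0):=(0,0)$ and, for $n\geq 0$, define $(u^{n+1},\rho^{n+1})$ as the solution of the two decoupled linear transport problems
\begin{equation*}
\partial_t u^{n+1} + u^n \partial_x u^{n+1} = \Lambda^{-1}\partial_x(\rho^n - u^n), \qquad
\partial_t \rho^{n+1} + u^n \partial_x \rho^{n+1} = -\rho^n \partial_x u^n,
\end{equation*}
together with the prescribed periodic initial data; Proposition \ref{danchinLT} produces these iterates in the appropriate Besov classes. Applying that proposition at level $s$ to the $u$-equation and at level $s-1$ to the $\rho$-equation, and bounding the forcing terms via the algebra property in Lemma \ref{besov}(2) (valid since $s-1>\tfrac{1}{p}$) together with \eqref{contlamb}, one derives a closed differential inequality that yields a uniform bound of the form $\|(u^n,\rho^n)\|_{L^\infty([0,T];\,B^s_{p,r}\times B^{s-1}_{p,r})}\leq 2\bigl(\|u_0\|_{B^s_{p,r}}+\|\rho_0\|_{B^{s-1}_{p,r}}\bigr)$ on the lifespan $T$ claimed in the statement.

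Next, applying Proposition \ref{danchinLT} to the differences $u^{n+1}-u^n$ and $\rho^{n+1}-\rho^n$ at regularity one level lower shows that $(u^n,\rho^n)$ is Cauchy in $C([0,T];\,B^{s-1}_{p,r}(\mathbb{T})\times B^{s-2}_{p,r}(\mathbb{T}))$; interpolation (Lemma \ref{besov}(3)) combined with the uniform bound from the previous step upgrades the convergence to the full regularity, and one checks that the limit solves \eqref{FW}. Uniqueness and continuity of the data-to-solution map follow from the same difference estimates applied to two solutions in place of consecutive iterates. The only point that conceivably requires verification rather than mere translation is that the paraproduct, commutator, and algebra bounds used in \cite{D} depend only on $s,p,r$ and not on structural properties particular to $\mathbb{R}$; since the periodic toolkit recalled in Section \ref{sec2} reproduces all of these estimates with the same structure, no genuine obstruction arises and the corollary reduces to a bookkeeping exercise.
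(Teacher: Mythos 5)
Your overall route matches the paper's: the corollary is proved by transferring the real-line proof of Theorem \ref{existmain} in \cite{D} to the torus, using the periodic toolkit recalled in Section \ref{sec2}, and the paper's own proof is exactly such a transfer, given by citation rather than by re-running the iteration. The one place where your sketch is not mere bookkeeping, however, is the claim that Proposition \ref{danchinLT} ``produces'' the iterates $(u^{n+1},\rho^{n+1})$. As stated, that proposition is an a priori estimate for a solution that is \emph{assumed} to exist in ${\bf L}^\infty((0,T);B^s_{p,r})\cap C([0,T];\mathcal{D}')$; it is not an existence theorem for the linear transport problems. Existence of the (approximate) solutions is precisely the step in \cite{D} that is carried out with a Friedrichs mollifier, and the only substantive adaptation the paper identifies in passing to $\mathbb{T}$ is the replacement of that mollifier by a periodic one, as was done for the single Fornberg-Whitham equation in \cite[Section 3.1]{HT}; uniqueness and continuous dependence are then obtained by the approximation arguments of \cite[Sections 3.2-3.3]{D}, which is essentially what your difference estimates amount to. So your proposal is correct in outline, but to close it you must either make the periodic mollifier adaptation explicit or invoke a periodic existence theory for linear transport equations (flow/characteristics method, or mollification of data and velocity followed by passage to the limit) -- that is the actual content of the corollary's proof, and it is the point your write-up glosses over.

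A minor additional caveat about your specific scheme: in the contraction step at the lower regularity $(s-1,s-2)$, the index $s-2$ equals $1+\frac{1}{p}$ when $s=3+\frac{1}{p}$, a value allowed by the hypothesis $s>\max\{2+\frac{1}{p},\frac{5}{2}\}$; for $r\neq 1$ this endpoint is excluded from Proposition \ref{danchinLT}, so the difference estimate there needs the usual workaround (estimate at a slightly lower index or in $B^{s-2}_{p,\infty}$ and interpolate), exactly as handled in \cite{D}. This does not affect the validity of the strategy, but it should be acknowledged if you present the argument as a verbatim transfer.
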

\begin{proof} 
Existence of a solution to \eqref{FW} is proved by altering the mollifier used to prove Theorem \ref{existmain}. This adaptation of the mollifier was done for the single Fornberg-Whitham equation in \cite[Section 3.1]{HT}. Uniqueness and continuous dependence on periodic initial data for the system \eqref{FW} are established by approximation arguments similar to those in \cite[Sections 3.2-3.3]{D}.

\end{proof}

\section{Non-uniform dependence on initial data}\label{sec3}

In this section we establish nonuniform dependence on initial data in the periodic case for the two-component FW system \eqref{FW} in Besov spaces. 

\begin{theorem}\label{mainthm}
Let $s> \max\{2+\frac{1}{p}, \frac{5}{2}\}$ and $r \in [1, \infty]$. The data-to-solution map $(u_0, \rho_0) \mapsto (u(t), \rho(t))$ of the Cauchy problem \eqref{FW} is not uniformly continuous from any bounded subset of $B^s_{p,r}(\mathbb{T}) \times B^{s-1}_{p,r}(\mathbb{T})$ into $\mathcal{C}([0,T];B^s_{p,r}(\mathbb{T}))\times\mathcal{C}([0,T];B^{s-1}_{p,r}(\mathbb{T}))$ where $T$ is given by Theorem \ref{existmain}. In particular, there exist two sequences of solutions $\{(u_{\omega,n}, \rho_{\omega,n})\}$ with $\omega = \pm 1$ such that the following hold
\begin{enumerate}
\item[(i)] $\lim\limits_{n\to\infty} \left(\|u_{1,n}(0) - u_{-1,n}(0)\|_{B^s_{p,r}} +\|\rho_{1,n}(0) - \rho_{-1,n}(0)\|_{B^{s-1}_{p,r}}\right) = 0$.
\item[(ii)] $\liminf\limits_{n\to\infty}\left(\|u_{1,n} - u_{-1,n}\|_{B^s_{p,r}} + \|\rho_{1,n} - \rho_{-1,n}\|_{B^{s-1}_{p,r}}\right)\gtrsim |\sin t|$.
\end{enumerate}
\end{theorem}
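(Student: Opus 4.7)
The plan is to apply the Himonas--Misiolek--Kato method of approximate solutions, already used for related dispersive PDEs in \cite{HM,HT,HTi,KT,YL}. I will construct two explicit families $(u^{\mathrm{app}}_{\omega,n},\rho^{\mathrm{app}}_{\omega,n})$ that solve \eqref{FW} up to small residuals, use Corollary \ref{corexist} to produce exact solutions with the same initial data, use Proposition \ref{danchinLT} to control the discrepancy between approximate and exact solutions, and finally extract (i) and (ii) from the trigonometric identity $\cos(nx-t) - \cos(nx+t) = -2\sin(nx)\sin t$.

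For the ansatz I take
\[
u^{\mathrm{app}}_{\omega,n}(t,x) = \omega n^{-1} + n^{-s}\cos(nx-\omega t), \qquad \rho^{\mathrm{app}}_{\omega,n}(t,x) = n^{-s}\cos(nx-\omega t).
\]
This choice is designed so that $\rho^{\mathrm{app}}_{\omega,n} - u^{\mathrm{app}}_{\omega,n} = -\omega n^{-1}$, making the forcing $\Lambda^{-1}\partial_x(\rho^{\mathrm{app}} - u^{\mathrm{app}})$ vanish identically. The low-frequency piece $\omega n^{-1}$ in $u^{\mathrm{app}}$ is tuned so that the Burgers-type transport term exhibits a Kato-style cancellation
\[
\partial_t u^{\mathrm{app}}_{\omega,n} + u^{\mathrm{app}}_{\omega,n}\,\partial_x u^{\mathrm{app}}_{\omega,n} = -n^{1-2s}\cos(nx-\omega t)\sin(nx-\omega t),
\]
so the residual $E_1$ has Fourier support at frequency $2n$ and $B^s_{p,r}(\mathbb{T})$ norm of order $n^{1-s}$. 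A parallel computation for the $\rho$ equation yields $\|E_2(t)\|_{B^{s-1}_{p,r}} = O(n^{-s})$; both vanish as $n \to \infty$ under the assumption $s > \tfrac{5}{2}$.

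Let $(u_{\omega,n},\rho_{\omega,n})$ denote the unique exact solution on $[0,T]$ with data $(u^{\mathrm{app}}_{\omega,n}(0),\rho^{\mathrm{app}}_{\omega,n}(0))$ supplied by Corollary \ref{corexist}; the lifespan $T$ can be chosen independent of $n$ and $\omega$ because the initial data are uniformly bounded in $B^s_{p,r}\times B^{s-1}_{p,r}$. Setting $v := u_{\omega,n} - u^{\mathrm{app}}_{\omega,n}$ and $\eta := \rho_{\omega,n} - \rho^{\mathrm{app}}_{\omega,n}$, these differences satisfy a linear transport system with velocity $u_{\omega,n}$, zero initial data, and sources containing $-E_1, -E_2$ together with lower-order couplings in $(v,\eta)$ controlled using \eqref{contlamb} and the algebra property of $B^s_{p,r} \cap {\bf L}^\infty$. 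Proposition \ref{danchinLT} applied to each equation and closed by a Gronwall bootstrap on the combined norm yields
\[
\|v(t)\|_{B^s_{p,r}} + \|\eta(t)\|_{B^{s-1}_{p,r}} \lesssim n^{1-s}, \qquad t \in [0,T].
\]

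Statement (i) is then immediate: $u^{\mathrm{app}}_{1,n}(0) - u^{\mathrm{app}}_{-1,n}(0) = 2n^{-1}$ and $\rho^{\mathrm{app}}_{1,n}(0) = \rho^{\mathrm{app}}_{-1,n}(0)$, so the initial data of the exact solutions differ by $O(n^{-1})$ in the product norm. For (ii), the key identity
\[
u^{\mathrm{app}}_{1,n}(t) - u^{\mathrm{app}}_{-1,n}(t) = 2n^{-1} - 2n^{-s}\sin(nx)\sin t
\]
has $B^s_{p,r}(\mathbb{T})$ norm bounded below by a constant multiple of $|\sin t|$ since the high-frequency piece sits in a single Littlewood-Paley block at frequency $n$ with $\|\sin(nx)\|_{B^s_{p,r}(\mathbb{T})} \approx n^s$. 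Combining the reverse triangle inequality with the $O(n^{1-s})$ error bound from the previous step and passing to $\liminf_{n\to\infty}$ delivers the lower bound for the exact solutions. The main technical obstacle will be the residual and difference estimates in Besov norms: justifying that the Kato cancellation indeed produces a residual of size $n^{1-s}$ on the torus, applying the Besov product estimates to the coupling terms in the linearized system, and verifying that the lifespan $T$ in Proposition \ref{danchinLT} is uniform in $n$; once these technical points are handled, the rest of the proof reduces to explicit trigonometric computations and an application of the linear transport estimate.
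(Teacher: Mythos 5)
Your overall strategy is the same as the paper's (Kato-style approximate solutions with a slow constant drift plus a high-frequency wave, exact solutions from the same data, a transport-estimate bound on the discrepancy, and the reverse triangle inequality with $\|\cos(nx)\|_{B^s_{p,r}}\gtrsim n^s$), and your residual computations are fine: with your ansatz one indeed gets $\|E_1\|_{B^s_{p,r}}=O(n^{1-s})$ and $\|E_2\|_{B^{s-1}_{p,r}}=O(n^{-s})$. The sign slip in $\cos(nx-t)-\cos(nx+t)=2\sin(nx)\sin t$ is harmless.

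The genuine gap is the step where you claim that Proposition \ref{danchinLT} ``closed by a Gronwall bootstrap on the combined norm'' yields $\|v(t)\|_{B^s_{p,r}}+\|\eta(t)\|_{B^{s-1}_{p,r}}\lesssim n^{1-s}$ directly at the regularity level $s$. This does not close. In the error system the coupling terms are of the form $v\,\partial_x u_{\omega,n}$ (and $\eta\,\partial_x u_{\omega,n}$, $\rho^{\mathrm{app}}\partial_x v$), and estimating them in $B^s_{p,r}$ by the algebra property costs $\|\partial_x u_{\omega,n}\|_{B^s_{p,r}}\leq\|u_{\omega,n}\|_{B^{s+1}_{p,r}}$, which is of size $n$ (the data have $B^{s+1}_{p,r}\times B^{s}_{p,r}$ norm $\approx n$); the same happens if you keep $\partial_x u^{\mathrm{app}}$, since $\|\partial_x u^{\mathrm{app}}\|_{B^s_{p,r}}\approx n^{1-s}\cdot n^{s}=n$. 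Gr\"onwall then produces a factor $e^{Cnt}$, which for fixed $t>0$ destroys the $O(n^{1-s})$ residual gain, so no bound uniform in $n$ follows at level $s$. (A tame product estimate $\|fg\|_{B^s}\lesssim\|f\|_{B^s}\|g\|_{L^\infty}+\|f\|_{L^\infty}\|g\|_{B^s}$ does not rescue the argument by itself, because the term $\|v\|_{L^\infty}\,\|\partial_x u^{\mathrm{app}}\|_{B^s}\approx n\|v\|_{L^\infty}$ then has to be controlled, which again requires an a priori error bound in a weaker norm.) This is exactly why the paper does \emph{not} estimate the error at level $s$: it bounds $\|\mathcal{E}(t)\|_{B^{\gamma}_{p,r}\times B^{\gamma-1}_{p,r}}\lesssim n^{\gamma-s-1}$ for $\gamma\in\bigl(s-\tfrac32,\,s-1\bigr)$, where $\|u_{\omega,n}\|_{B^{\gamma+1}_{p,r}}\lesssim n^{\gamma+1-s}\leq 1$ so the Gr\"onwall coefficient is $O(1)$, combines this with the crude bound $\|\mathcal{E}(t)\|_{B^{\delta}_{p,r}\times B^{\delta-1}_{p,r}}\lesssim n^{\delta-s}$ for some $\delta\in(s,s+1)$, and interpolates to get $\|\mathcal{E}(t)\|_{B^{s}_{p,r}\times B^{s-1}_{p,r}}\lesssim n^{-\theta}$ with $\theta=\frac{\delta-s}{\delta-\gamma}\in(0,1)$ (note: only $n^{-\theta}$, not your claimed $n^{1-s}$). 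You need to add this two-level (weak-norm estimate plus interpolation) argument, or an equivalent device, before the reverse-triangle-inequality step in (ii) is justified; the rest of your outline then goes through as in the paper.
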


\begin{proof}
For $n\in\mathbb{N}$, we consider two sequences of functions $\{(u^{\omega,n}, \rho^{\omega,n})\}$ with $\omega = \pm 1$, defined by
\begin{equation*}
\begin{cases}
u^{\omega,n} = \frac{-\omega}{n} + \frac{1}{n^s} \sin(nx+\omega t)\\
\rho^{\omega,n} = \frac{1}{n}+\frac{1}{n^{s}} \sin(nx+\omega t)
\end{cases}~.
\end{equation*}
We take initial data 
\begin{equation*}\label{initialdata}
\begin{cases}
u^0_{\omega,n} = u^{\omega,n}(0) = \frac{-\omega}{n} + \frac{1}{n^s} \sin nx\\
\rho^0_{\omega,n} = \rho^{\omega,n}(0) = \frac{1}{n} + \frac{1}{n^{s}} \sin nx
\end{cases}~.
\end{equation*}
Let the solutions to the FW system \eqref{FW} with these initial data be denoted by $(u_{\omega,n}, \rho_{\omega,n})$. At $t=0$, we have
 \begin{equation*}\label{zeroinitial}
\lim_{n\to\infty} \left(\|u^0_{1,n} - u^0_{-1,n}\|_{B^s_{p,r}} +\|\rho^0_{1,n} - \rho^0_{-1,n}\|_{B^{s-1}_{p,r}}\right) = \lim_{n\to\infty} 2 \|n^{-1}\|_{B^s_{p,r}} = 0~,
\end{equation*}
which proves part {\it(i)} of Theorem \ref{mainthm}.\\

\noindent To prove part {\it(ii)}, first we estimate $\|(u^0_{\omega,n}, \rho^0_{\omega,n})\|_{B^{\gamma}_{p,r} \times B^{\gamma-1}_{p,r}}$ and $\|(u^{\omega,n}, \rho^{\omega,n})\|_{B^{\gamma}_{p,r} \times B^{\gamma-1}_{p,r}}$ for any $\gamma > 0$ and $r<\infty$. Using the triangle inequality,  we have
\begin{equation}\label{norminitial}
\|(u^0_{\omega,n}, \rho^0_{\omega,n})\|_{B^{\gamma}_{p,r} \times B^{\gamma-1}_{p,r}}~\leq~ 2\|n^{-1}\|_{B^{\gamma}_{p,r}} + n^{-s}\|\sin nx\|_{B^{\gamma}_{p,r}} + n^{1-s}\|\sin nx\|_{B^{\gamma-1}_{p,r}}
 \end{equation}
By Definition \ref{besovdefn},
\begin{equation}\label{sinbesov}
\|\sin nx\|_{B^{\gamma}_{p,r}} = \left(\sum\limits_{q\geq -1} 2^{\gamma q r}\|\Delta_q \sin nx\|^r_{{\bf L}^p}\right)^{\frac{1}{r}}~.
\end{equation}
From Definition \ref{defnbesov}, we have $\|\Delta_q \sin(nx)\|_{{\bf L}^p} = \varphi_q(n)$ where $0 < \varphi_q(n) \leq 1$ for all $q$ such that $\frac{1}{\ln (2)}\ln \left(\frac{3}{8}n\right)\leq q \leq \frac{1}{\ln (2)}\ln \left(\frac{4}{3}n\right)$ and $\varphi_q\left(n\right)= 0$ otherwise.
Hence \eqref{sinbesov} implies that for any $\gamma>0$,
\begin{equation*}
\|\sin (nx)\|_{B^{\gamma}_{p,r}}~\leq~ \left(\sum\limits_{q= \frac{1}{\ln (2)}\ln \left(\frac{3}{8}n\right)}^{ \frac{1}{\ln (2)}\ln \left(\frac{4}{3}n\right)} \left(2^q\right)^{\gamma r}\right)^{\frac{1}{r}}~.
\end{equation*}
As $2^q \leq \frac{4}{3}n$ for every term in the summation, from the above we get that
\begin{align}\label{sinnorm}
\|\sin (nx)\|_{B^{\gamma}_{p,r}}~&\leq~ \left(\sum\limits_{q= \frac{1}{\ln (2)}\ln \left(\frac{3}{8}n\right)}^{ \frac{1}{\ln (2)}\ln \left(\frac{4}{3}n\right)} \left(\frac{4}{3}n\right)^{\gamma r}\right)^{\frac{1}{r}} \nonumber \\
&=~ \left(\frac{1}{\ln (2)}\left[\ln \left(\frac{4}{3}n\right) - \ln \left(\frac{3}{8}n\right)\right]\right)^{\frac{1}{r}}\left(\frac{4}{3}n\right)^{\gamma} \nonumber \\
&=~ \left(\frac{1}{\ln (2)} \ln \left(\frac{32}{9}\right)\right)^{\frac{1}{r}}\left(\frac{4}{3}\right)^{\gamma}n^{\gamma}~=~ C_{\gamma} n^{\gamma}~.
\end{align}
Here and in what follows, $C_{\gamma}$ is a generic constant which depends only on $\gamma$, for fixed $p$ and $r$. Similarly, it follows that for any $\gamma>0$,
\begin{equation}\label{cosupperbd}
\|\cos (nx)\|_{B^{\gamma}_{p,r}}~\leq~C_{\gamma} n^{\gamma}.
\end{equation}
Using \eqref{sinnorm} and observing that $\Delta_q n^{-1} = 0$ for all $q>-1$, from \eqref{norminitial} we obtain
\begin{align}\label{initialnorm}
\|(u^0_{\omega,n}, \rho^0_{\omega,n})\|_{B^{\gamma}_{p,r} \times B^{\gamma-1}_{p,r}}~&\leq~ 2^{1-\gamma}n^{-1} + C_{\gamma} n^{\gamma}n^{-s}+C_{\gamma} n^{\gamma-1}n^{1-s}\nonumber \\
&\leq~ C_{\gamma}\max\{n^{-1},n^{\gamma-s}\}~.
\end{align}
Since $(u^{\omega,n},\rho^{\omega,n})$ is a phase shift of $(u^0_{\omega,n}, \rho^0_{\omega,n})$, we have 
\begin{equation}\label{approxnorm}
\|(u^{\omega,n},\rho^{\omega,n})\|_{B^{\gamma}_{p,r}\times B^{\gamma-1}_{p,r}}~\leq~C_{\gamma}\max\{n^{-1},n^{\gamma-s}\}~.
\end{equation}
If $r=\infty$, \eqref{initialnorm} and \eqref{approxnorm} follow immediately from Definition \ref{besovdefn}.\\

\noindent We complete the proof of Theorem \ref{mainthm} by establishing {\it(ii)} for $\{(u^{\omega,n}, \rho^{\omega,n})\}$, taking advantage of the following lemma, whose proof follows the proof of Theorem \ref{mainthm}. Lemma \ref{lemma} establishes that for each $n$ and $\omega$, $(u^{\omega,n}, \rho^{\omega,n})$ approximates $(u_{\omega,n}, \rho_{\omega,n})$ in $B^s_{p,r}(\mathbb{T}) \times B^{s-1}_{p,r}(\mathbb{T})$ uniformly on $[0,T]$ for some $T>0$.

\begin{lemma}\label{lemma}
Let $\mathcal{E} = (\mathcal{E}_1, \mathcal{E}_2)$ where $\mathcal{E}_1 = u_{\omega,n} - u^{\omega,n}$ and $\mathcal{E}_2 = \rho_{\omega,n} - \rho^{\omega,n}$. Then for all $t \in (0, T)$, $\|\mathcal{E}(t)\|_{B^s_{p,r}\times B^{s-1}_{p,r}} = \|\mathcal{E}_1(t)\|_{B^{s}_{p,r}} + \|\mathcal{E}_2(t)\|_{B^{s-1}_{p,r}} \to 0$ as $n \to \infty$.
\end{lemma}

\noindent We show that $(u_{-1,n},\rho_{-1,n})$ and $(u_{1,n},\rho_{1,n})$ stay bounded away from each other for any $t>0$. Since
\begin{equation}\label{uineq}
\|u_{1,n} - u_{-1,n}\|_{B^{s}_{p,r}}~\geq~\|u^{1,n} - u^{-1,n}\|_{B^{s}_{p,r}} - \|u^{1,n} - u_{1,n}\|_{B^{s}_{p,r}} - \|u^{-1,n} - u_{-1,n}\|_{B^{s}_{p,r}}
\end{equation}
and
\begin{equation}\label{rhoineq}
\|\rho_{1,n} - \rho_{-1,n}\|_{B^{s-1}_{p,r}}~\geq~\|\rho^{1,n} - \rho^{-1,n}\|_{B^{s-1}_{p,r}} - \|\rho^{1,n} - \rho_{1,n}\|_{B^{s-1}_{p,r}} - \|\rho^{-1,n} - \rho_{-1,n}\|_{B^{s-1}_{p,r}}~,
\end{equation}
and $\|\mathcal{E}_1(t)\|_{B^{s}_{p,r}}$ and $\|\mathcal{E}_2(t)\|_{B^{s-1}_{p,r}}$ go to $0$ by Lemma \ref{lemma}, the last two terms on the right hand side of both \eqref{uineq} and \eqref{rhoineq} approach $0$ as $n\to\infty$. Hence we obtain
\begin{multline}\label{urhofinish1}
\|u_{1,n} - u_{-1,n}\|_{B^{s}_{p,r}} + \|\rho_{1,n} - \rho_{-1,n}\|_{B^{s-1}_{p,r}} \geq~\|u^{1,n} - u^{-1,n}\|_{B^{s}_{p,r}} + \|\rho^{1,n} - \rho^{-1,n}\|_{B^{s-1}_{p,r}}\\
\geq n^{-s}\left(\|\sin(nx+t) - \sin(nx-t)\|_{B^{s}_{p,r}} + \|\sin(nx+t) - \sin(nx-t)\|_{B^{s-1}_{p,r}}\right)\\
= 2n^{-s}\left(\|\cos (nx) \|_{B^{s}_{p,r}}|\sin (t)| + \|\cos (nx) \|_{B^{s-1}_{p,r}}|\sin (t)|\right)~.
\end{multline}
By Definition \ref{besovdefn}, if $r=\infty$, we immediately have 
\begin{equation}\label{cosnorm}
\|\cos (nx)\|_{B^{s}_{p,r}} \geq C_s n^s~,
\end{equation}
where $C_s$ is a constant that depends only on $s$. For $1\leq r < \infty$, there is a similar estimate, whose proof is given in the Appendix.
Using \eqref{cosnorm}, it follows from \eqref{urhofinish1} that
\begin{multline*}
\liminf_{n\to\infty}\left(\|u_{1,n} - u_{-1,n}\|_{B^{s}_{p,r}} + \|\rho_{1,n} - \rho_{-1,n}\|_{B^{s-1}_{p,r}}\right)~\\\geq~2 C_s\left(\liminf_{n\to\infty} |\sin (t)| + \liminf_{n\to\infty} n^{-1} |\sin (t)|\right)~\approx~ |\sin(t)|~>~0~.
\end{multline*}
This proves part {\it(ii)} of Theorem \ref{mainthm} and completes the proof of non-uniform dependence on initial data for the two-component FW system \eqref{FW} in $B^s_{p,r}(\mathbb{T}) \times B^{s-1}_{p,r}(\mathbb{T})$ for $s> \max\{2+\frac{1}{p}, \frac{5}{2}\}$. 

\end{proof}

\noindent Now we prove Lemma \ref{lemma}.\\

\begin{proof} {\bf (Lemma \ref{lemma})} We show that $\|\mathcal{E}(t)\|_{B^{\gamma}_{p,r}\times B^{\gamma-1}_{p,r}} \to 0$ as $n \to \infty$ for any $\gamma$ with $s-\frac{3}{2} < \gamma < s-1$, and then interpolate between such a $\gamma$ and a value $\delta > s$. Recall that $\mathcal{E}_1 = u_{\omega,n} - u^{\omega,n}$ and $\mathcal{E}_2 = \rho_{\omega,n} - \rho^{\omega,n}$. It can be seen that $\mathcal{E}_1$ and $\mathcal{E}_2$  vanish at $t=0$ and that they satisfy the equations
\begin{equation}\label{e1e2transport}
\begin{cases}
\partial_t \mathcal{E}_1 + u^{\omega,n}\partial_x \mathcal{E}_1 = -\mathcal{E}_1\partial_x u_{\omega, n} + \Lambda^{-1}\partial_x (\mathcal{E}_2 - \mathcal{E}_1) - R_1\\
\partial_t \mathcal{E}_2 + u_{\omega,n}\partial_x \mathcal{E}_2 = -\mathcal{E}_2\partial_x u_{\omega, n} -\rho^{\omega,n}\partial_x\mathcal{E}_1 - \mathcal{E}_1\partial_x\rho^{\omega,n} - R_2
\end{cases}~.
\end{equation}
Here $R_1$ and $R_2$ are the FW system for the approximate solutions, that is,
\begin{equation*}\label{r1r2defn}
\begin{cases}
R_1 = \partial_t u^{\omega, n} + u^{\omega, n}\partial_x u^{\omega, n} - \Lambda^{-1}\partial_x(\rho^{\omega, n} - u^{\omega, n}) \\
R_2 = \partial_t \rho^{\omega, n} + \partial_x (\rho^{\omega,n}u^{\omega, n})
\end{cases}~.
\end{equation*}
\begin{itemize}
\item Estimate for $\|R_1\|_{B^{\gamma}_{p,r}}$: Using the definitions of $u^{\omega, n}$ and $\rho^{\omega,n}$, we have
\begin{multline*}
R_1 = \partial_t u^{\omega, n} + u^{\omega, n}\partial_x u^{\omega, n} - \Lambda^{-1}\partial_x(\rho^{\omega,n} - u^{\omega,n}) = \frac{1}{2n^{2s-1}}\sin\left(2(nx+\omega t)\right)~.
\end{multline*}
Then by \eqref{sinnorm}, 
\begin{equation*}\label{r1est}
\|R_1\|_{B^{\gamma}_{p,r}} \leq C_{\gamma}n^{\gamma-2s+1}~.
\end{equation*}
\item Estimate for $\|R_2\|_{B^{\gamma-1}_{p,r}}$: Using the definitions of $u^{\omega,n}$ and $\rho^{\omega,n}$,
\begin{multline*}
R_2 = \partial_t \rho^{\omega, n} + \partial_x (\rho^{\omega,n}u^{\omega, n})
= \frac{1}{n^{s}}\cos(nx+\omega t) + \frac{1}{n^{2s-1}}\sin\left(2(nx+\omega t)\right)~.
\end{multline*}
So from \eqref{sinnorm} and \eqref{cosupperbd} it follows that
\begin{equation*}\label{r2est}
\|R_2\|_{B^{\gamma-1}_{p,r}} \leq C_{\gamma} \left(n^{-s}n^{\gamma-1} + n^{1-2s}n^{\gamma-1}\right) \leq C_{\gamma} n^{\gamma-s-1}~.
\end{equation*}
\end{itemize}
Therefore,
\begin{equation}\label{r1r2est}
\|R_1(\tau)\|_{B^{\gamma}_{p,r}} + \|R_2(\tau)\|_{B^{\gamma-1}_{p,r}}~\lesssim~n^{\gamma-s-1}~.
\end{equation}
\noindent Since $\mathcal{E}_1(t)$ and $\mathcal{E}_2(t)$ satisfy the linear transport equations \eqref{e1e2transport}, to estimate the error $\|\mathcal{E}(t)\|_{B^{\gamma}_{p,r} \times B^{\gamma-1}_{p,r}}$, we apply Proposition \ref{danchinLT} to obtain
\begin{equation}\label{e1ineq}
\|\mathcal{E}_1(t)\|_{B^{\gamma}_{p,r}} \leq K_1e^{K_1V_1(t)}\int_0^t e^{- K_1V_1(\tau)}\|F_1({\tau})\|_{B^{\gamma}_{p,r}} ~d\tau
\end{equation}
and
\begin{equation}\label{e2ineq}
\|\mathcal{E}_2(t)\|_{B^{\gamma-1}_{p,r}} \leq K_2e^{K_2V_2(t)}\int_0^t e^{- K_2V_2(\tau)}\|F_2({\tau})\|_{B^{\gamma-1}_{p,r}} ~d\tau
\end{equation}
where $K_1$, $K_2$ are positive constants depending on $\gamma$ and
\begin{equation}\label{f1}
F_1(t) = -\mathcal{E}_1\partial_x u_{\omega, n} + \Lambda^{-1}\partial_x (\mathcal{E}_2 - \mathcal{E}_1) - R_1~,
\end{equation}
\begin{equation}\label{f2}
~~~~~F_2(t) = -\mathcal{E}_2\partial_x u_{\omega, n} -\rho^{\omega,n}\partial_x\mathcal{E}_1 - \mathcal{E}_1\partial_x\rho^{\omega,n} - R_2~.
\end{equation}
\begin{equation*}
\hspace*{-2.1cm}V_1(t) = \int_0^t \|\partial_x u^{\omega,n}(\tau)\|_{B^{\gamma-1}_{p,r}} ~d\tau~,
\end{equation*}
\begin{equation*}
\hspace*{+1.8cm}V_2(t) = 
\begin{cases}
\int_0^t \|\partial_x u_{\omega,n}(\tau)\|_{B^{1/p}_{p,r}\cap{\bf L}^{\infty}} ~d\tau~~~~\textrm{if}~\gamma<2+\frac{1}{p}\\

\int_0^t \|\partial_x u_{\omega,n}(\tau)\|_{B^{\gamma-2}_{p,r}} ~d\tau~~~~~~~~\textrm{otherwise}
\end{cases}~.
\end{equation*}
Since $s-\frac{3}{2} < \gamma < s-1$, we have
\begin{equation}\label{v1}
V_1(t) \lesssim ~ n^{\gamma - s}t \leq~n^{-1}t
~~\textrm{
and}
~~V_2(t) \leq C\int_0^t \|u_{\omega,n}(\tau)\|_{B^{\gamma}_{p,r}}~d\tau
\end{equation}
for some constant $C$ that depends on $\gamma$, $p$ and $r$. By Theorem \ref{existmain} and \eqref{initialnorm}, it follows that
\begin{equation}\label{v2}
V_2(t)  \leq 2C \int_0^t \|\left(u^0_{\omega,n}, \rho^0_{\omega,n}\right)\|_{B^{\gamma}_{p,r}\times B^{\gamma-1}_{p,r}}~d\tau~\lesssim~n^{-1}t~.
\end{equation}
Let $K = \min\{K_1, K_2\}$. Using \eqref{v1} and \eqref{v2}, we combine \eqref{e1ineq} and \eqref{e2ineq} to get,
\begin{equation}\label{e1e2}
\|\mathcal{E}_1(t)\|_{B^{\gamma}_{p,r}} + \|\mathcal{E}_2(t)\|_{B^{\gamma-1}_{p,r}} \lesssim \int_0^t e^{K(t-\tau)/n}\left(\|F_1({\tau})\|_{B^{\gamma}_{p,r}} + \|F_2(\tau)\|_{B^{\gamma-1}_{p,r}}\right) ~d\tau~.
\end{equation}
\begin{itemize}
\item Estimate for $\|F_1(\tau)\|_{B^{\gamma}_{p,r}}$: From \eqref{f1}, as $B^{\gamma}_{p,r}$ is a Banach algebra, we have
\begin{align}\label{f1ineq1}
\|F_1(\tau)\|_{B^{\gamma}_{p,r}} & \leq \|\mathcal{E}_1\|_{B^{\gamma}_{p,r}}\|\partial_x u_{\omega, n}\|_{B^{\gamma}_{p,r}} + \|\Lambda^{-1}\partial_x (\mathcal{E}_2 - \mathcal{E}_1)\|_{B^{\gamma}_{p,r}}+ \|R_1\|_{B^{\gamma}_{p,r}}~ \nonumber \\
& \leq \|\mathcal{E}_1\|_{B^{\gamma}_{p,r}}\|u_{\omega, n}\|_{B^{\gamma+1}_{p,r}} + \|\Lambda^{-1}\partial_x (\mathcal{E}_2 - \mathcal{E}_1)\|_{B^{\gamma}_{p,r}}+ \|R_1\|_{B^{\gamma}_{p,r}}~.
\end{align}
From \eqref{contlamb}, 
\begin{equation}\label{f1req}
\|\Lambda^{-1}\partial_x (\mathcal{E}_2 - \mathcal{E}_1)\|_{B^{\gamma}_{p,r}} \leq \kappa \|\mathcal{E}_2 - \mathcal{E}_1\|_{B^{\gamma-1}_{p,r}} \leq M\left(\|\mathcal{E}_1\|_{B^{\gamma}_{p,r}} + \|\mathcal{E}_2\|_{B^{\gamma-1}_{p,r}}\right)
\end{equation}
where $M$ is a constant depending on $\gamma, p$ and $r$. By Theorem \ref{existmain} we have
\begin{equation*}
\|u_{\omega, n}\|_{B^{\gamma+1}_{p,r}} \leq 2\|\left(u^0_{\omega,n}, \rho^0_{\omega,n}\right)\|_{B^{\gamma+1}_{p,r}\times B^{\gamma}_{p,r}}~,
\end{equation*}
so by \eqref{initialnorm}, 
$\|u_{\omega, n}\|_{B^{\gamma+1}_{p,r}} \leq 2C_{\gamma}\max\{n^{-1}, n^{\gamma+1-s}\}$.
As $\gamma > s-\frac{3}{2}$, 
\begin{equation}\label{gamma1}
\|u_{\omega, n}\|_{B^{\gamma+1}_{p,r}} \lesssim n^{\gamma+1-s}~.
\end{equation}
Using \eqref{f1req} and \eqref{gamma1}, from \eqref{f1ineq1} we obtain
\begin{equation}\label{f1ineq}
\|F_1(\tau)\|_{B^{\gamma}_{p,r}} \lesssim \left(M + n^{\gamma+1-s}\right)\|\mathcal{E}_1(\tau)\|_{B^{\gamma}_{p,r}} + M\|\mathcal{E}_2(\tau)\|_{B^{\gamma-1}_{p,r}} + \|R_1(\tau)\|_{B^{\gamma}_{p,r}}~.
\end{equation}
\item Estimate for $\|F_2(\tau)\|_{B^{\gamma-1}_{p,r}}$: We may use the algebra property, (2) of Lemma \ref{besov}, for $B^{\gamma-1}_{p,r}$ since $\gamma -1 > s-\frac{5}{2} > 0$ and the functions we are dealing with are bounded. Then, from \eqref{f2},
\begin{align}\label{f2ineq1}
\|F_2(t)\|_{B^{\gamma-1}_{p,r}} &\leq \|\mathcal{E}_2\|_{B^{\gamma-1}_{p,r}}\|\partial_x u_{\omega, n}\|_{B^{\gamma-1}_{p,r}} + \|\rho^{\omega,n}\|_{B^{\gamma-1}_{p,r}}\|\partial_x \mathcal{E}_1\|_{B^{\gamma-1}_{p,r}} \nonumber \\ 
&~~~~+ \|\partial_x\rho^{\omega,n}\|_{B^{\gamma-1}_{p,r}}\|\mathcal{E}_1\|_{B^{\gamma-1}_{p,r}} + \|R_2\|_{B^{\gamma-1}_{p,r}} \nonumber \\
&\lesssim n^{-1}\|\mathcal{E}_1\|_{B^{\gamma}_{p,r}} + \|\mathcal{E}_2\|_{B^{\gamma-1}_{p,r}}\|u_{\omega, n}\|_{B^{\gamma}_{p,r}} + \|R_2\|_{B^{\gamma-1}_{p,r}}~.
\end{align}
By Corollary \ref{corexist}, $\|u_{\omega, n}\|_{B^{\gamma}_{p,r}} \leq 2\|\left(u^0_{\omega,n}, \rho^0_{\omega,n}\right)\|_{B^{\gamma}_{p,r}\times B^{\gamma-1}_{p,r}}$,
which implies $$\|u_{\omega, n}\|_{B^{\gamma}_{p,r}} \leq 2C_{\gamma}\max\{n^{-1}, n^{\gamma-s}\}$$ by \eqref{initialnorm}.
 As $\gamma<s-1$, $\|u_{\omega, n}\|_{B^{\gamma}_{p,r}} \lesssim n^{-1}$. Using this in \eqref{f2ineq1} yields
\begin{equation}\label{f2ineq}
\|F_2(\tau)\|_{B^{\gamma-1}_{p,r}} \lesssim n^{-1}\|\mathcal{E}_1(\tau)\|_{B^{\gamma}_{p,r}} + n^{-1}\|\mathcal{E}_2(\tau)\|_{B^{\gamma-1}_{p,r}} + \|R_2(\tau)\|_{B^{\gamma-1}_{p,r}}~.
\end{equation}
\end{itemize}
Adding \eqref{f1ineq} and \eqref{f2ineq} gives
\begin{align}\label{f1f2}
\|F_1(\tau)\|_{B^{\gamma}_{p,r}} + \|F_2(\tau)\|_{B^{\gamma-1}_{p,r}}~& \lesssim~(M+n^{\gamma+1-s})\left(\|\mathcal{E}_1(\tau)\|_{B^{\gamma}_{p,r}} + \|\mathcal{E}_2(\tau)\|_{B^{\gamma-1}_{p,r}}\right) \nonumber \\ &~~~~~+ \|R_1(\tau)\|_{B^{\gamma}_{p,r}} + \|R_2(\tau)\|_{B^{\gamma-1}_{p,r}}~.
\end{align}
Substituting \eqref{f1f2} in \eqref{e1e2}, we obtain
\begin{equation}\label{efcombo1}
\|\mathcal{E}(t)\|_{B^{\gamma}_{p,r} \times B^{\gamma-1}_{p,r}} ~\lesssim~ f(t) + \int_0^t g(\tau)\|\mathcal{E}(\tau)\|_{B^{\gamma}_{p,r} \times B^{\gamma-1}_{p,r}}~d\tau
\end{equation}
where
\begin{equation}\label{part1}
f(t) \approx \int_0^t e^{K(t-\tau)/n}\left(\|R_1(\tau)\|_{B^{\gamma}_{p,r}} + \|R_2(\tau)\|_{B^{\gamma-1}_{p,r}}\right)~d\tau
\end{equation}
and
\begin{equation}\label{part2}
g(\tau) \approx (M+n^{\gamma+1-s})e^{K(t-\tau)/n}~\leq~(M+1)e^{K(t-\tau)/n}~.
\end{equation}
Using Gr\"{o}nwall's inequality, from \eqref{efcombo1} we obtain
\begin{equation}\label{efcombo2}
\|\mathcal{E}(t)\|_{B^{\gamma}_{p,r} \times B^{\gamma-1}_{p,r}}~\lesssim~f(t) + \int_0^t g(\tau)f(\tau)e^{\int_{\tau}^t g(z)~dz}~d\tau~.
\end{equation}
Using \eqref{r1r2est} along with \eqref{part1} and \eqref{part2}, from \eqref{efcombo2} we get \begin{equation}\label{s1}
\|\mathcal{E}(t)\|_{B^{\gamma}_{p,r}\times B^{\gamma-1}_{p,r}} \lesssim n^{\gamma-s-1}~,
\end{equation}
which means that $\|\mathcal{E}(t)\|_{B^{\gamma}_{p,r} \times B^{\gamma-1}_{p,r}} \to 0$ as $n\to\infty$ for any $s-\frac{3}{2} < \gamma < s-1$.\\

\noindent On the other hand, if $\delta \in (s, s+1)$, then noting that the solution with the given data is in $B^{\delta}_{p,r} \times B^{\delta-1}_{p,r}$ for any $\delta$ we have, for $0<t<T$ (from Theorem \ref{existmain})
\begin{align}\label{s2-1}
\|\mathcal{E}(t)\|_{B^{\delta}_{p,r}\times B^{\delta-1}_{p,r}}~ &~\leq~\|(u_{\omega,n},\rho_{\omega,n})\|_{B^{\delta}_{p,r}\times B^{\delta-1}_{p,r}}~+~\|(u^{\omega,n},\rho^{\omega,n})\|_{B^{\delta}_{p,r}\times B^{\delta-1}_{p,r}} \nonumber \\
&~\leq~2\|(u^0_{\omega,n},\rho^0_{\omega,n})\|_{B^{\delta}_{p,r}\times B^{\delta-1}_{p,r}}~+~\|(u^{\omega,n},\rho^{\omega,n})\|_{B^{\delta}_{p,r}\times B^{\delta-1}_{p,r}}~,
\end{align} 
where we have used the solution size estimate in Theorem \ref{existmain}.
Now, for $\delta < s+1$, equations \eqref{initialnorm} and \eqref{approxnorm} imply that $\|(u^0_{\omega,n},\rho^0_{\omega,n})\|_{B^{\delta}_{p,r}\times B^{\delta-1}_{p,r}} \leq C_{\delta}n^{\delta-s}$ and $\|(u^{\omega,n},\rho^{\omega,n})\|_{B^{\delta}_{p,r}\times B^{\delta-1}_{p,r}} \leq C_{\delta}n^{\delta-s}$, where $C_{\delta}$ denotes a constant that depends only on $\delta$, for a given $p$ and $r$. So  \eqref{s2-1} yields
\begin{equation}\label{s2}
\|\mathcal{E}(t)\|_{B^{\delta}_{p,r}\times B^{\delta-1}_{p,r}}~\lesssim ~n^{\delta-s}~.
\end{equation}
We use the interpolation property, (3) from Lemma \ref{besov}, with $\theta=\frac{\delta - s}{\delta - \gamma}$, to obtain
\begin{equation}\label{interpol}
\|\mathcal{E}(t)\|_{B^{s}_{p,r}\times B^{s-1}_{p,r}}~ \leq~\|\mathcal{E}(t)\|^{\theta}_{B^{\gamma}_{p,r}\times B^{\gamma-1}_{p,r}}\|\mathcal{E}(t)\|^{1-\theta}_{B^{\delta}_{p,r}\times B^{\delta-1}_{p,r}}~~.
\end{equation}
From \eqref{interpol}, using \eqref{s1} and \eqref{s2} we get 
\begin{equation}\label{theend}
\|\mathcal{E}(t)\|_{B^{s}_{p,r}\times B^{s-1}_{p,r}} ~\lesssim~ \left(n^{\gamma-s-1}\right)^{\frac{\delta - s}{\delta - \gamma}} \left(n^{\delta-s}\right)^{\frac{s-\gamma}{\delta-\gamma}}~=~n^{-\theta}~.
\end{equation}
As $\theta \in (0,1)$, \eqref{theend} implies that $\|\mathcal{E}(t)\|_{B^{s}_{p,r}\times B^{s-1}_{p,r}} \to 0$ as $n \to \infty$ for any $s> \max\{2+\frac{1}{p}, \frac{5}{2}\}$. This completes the proof of Lemma \ref{lemma}.\\

\end{proof}

\noindent When $p=r=2$, $B^s_{2,2}$ and $H^s$ are equivalent by \cite[Proposition 1.2]{D1} and so we get the following corollary.

\begin{corollary}
The data-to-solution map for the two-component FW system \eqref{FW} is not uniformly continuous from any bounded subset of $H^s(\mathbb{T})\times H^{s-1}(\mathbb{T})$ into $\mathcal{C}([0,T]; H^s(\mathbb{T}))\times\mathcal{C}([0,T]; H^{s-1}(\mathbb{T}))$ for $s>\frac{5}{2}$.
\end{corollary}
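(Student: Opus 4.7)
The plan is to exhibit two sequences of exact solutions $(u_{\omega,n}, \rho_{\omega,n})$, $\omega=\pm 1$, by first constructing approximate solutions and then correcting to the exact ones via the local well-posedness theorem (Corollary \ref{corexist}). The natural ansatz is a superposition of a low (in fact constant) mode and a high-frequency oscillation whose phase depends on $\omega t$, e.g.
$$u^{\omega,n}(t,x) \;=\; -\tfrac{\omega}{n} \;+\; n^{-s}\sin(nx+\omega t), \qquad \rho^{\omega,n}(t,x) \;=\; \tfrac{1}{n} \;+\; n^{-s}\sin(nx+\omega t).$$
The constant $-\omega/n$ appearing in $u^{\omega,n}$ is chosen precisely so that $\partial_t + u^{\omega,n}\partial_x$ applied to the oscillation cancels the would-be worst residual at order $n^{1-s}$. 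Part (i) is then immediate since $u^0_{1,n}-u^0_{-1,n}$ and $\rho^0_{1,n}-\rho^0_{-1,n}$ reduce to the constant $\mp 2/n$, whose Besov norms vanish as $n\to\infty$.

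For part (ii), I would use the triangle inequality to bound $\|u_{1,n}-u_{-1,n}\|_{B^s_{p,r}} + \|\rho_{1,n}-\rho_{-1,n}\|_{B^{s-1}_{p,r}}$ below by the separation of the approximations $\|u^{1,n}-u^{-1,n}\|_{B^s_{p,r}} + \|\rho^{1,n}-\rho^{-1,n}\|_{B^{s-1}_{p,r}}$ minus twice the error $\|u_{\omega,n}-u^{\omega,n}\|_{B^s_{p,r}} + \|\rho_{\omega,n}-\rho^{\omega,n}\|_{B^{s-1}_{p,r}}$. The separation is explicit: after the product-to-sum identity $\sin(nx+t)-\sin(nx-t) = 2\cos(nx)\sin t$, I need the lower bound $\|\cos(nx)\|_{B^\gamma_{p,r}}\gtrsim n^\gamma$, which follows from the Littlewood--Paley characterization since only a bounded number of dyadic blocks $\Delta_q$ around $q\sim \log_2 n$ capture $\cos(nx)$ and each contributes a unit $L^p$ norm (for $r<\infty$ a short separate computation is needed; for $r=\infty$ it is direct). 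Multiplying by the prefactor $n^{-s}$ gives a lower bound $\gtrsim |\sin t|$, independent of $n$.

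The heart of the argument — and the place I expect to spend the most effort — is the approximation lemma: $\|(u_{\omega,n}-u^{\omega,n},\,\rho_{\omega,n}-\rho^{\omega,n})\|_{B^s_{p,r}\times B^{s-1}_{p,r}}\to 0$. To obtain this I would:
(1) compute the residuals $R_1,R_2$ obtained by substituting the ansatz into \eqref{FW}, using the Banach algebra property of $B^\gamma_{p,r}$ and the continuity of $\Lambda^{-1}\partial_x\colon B^{\gamma-1}_{p,r}\to B^\gamma_{p,r}$ from \eqref{contlamb}, to show $\|R_1\|_{B^\gamma_{p,r}}+\|R_2\|_{B^{\gamma-1}_{p,r}}\lesssim n^{\gamma-s-1}$ thanks to the cancellation built into the ansatz;
(2) write the error system for $\mathcal{E}_1,\mathcal{E}_2$ as a coupled linear transport system with source depending on $R_1,R_2,\mathcal{E}_1,\mathcal{E}_2$ and on derivatives of the exact/approximate profiles;
(3) apply the Danchin-type transport estimate (Proposition \ref{danchinLT}) at an \emph{auxiliary} regularity $\gamma\in(s-\tfrac32,s-1)$, chosen to avoid the threshold $1+\tfrac1p$ and to make the coefficient norms $V_1,V_2$ decay like $n^{-1}t$. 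Gr\"onwall then yields $\|\mathcal{E}(t)\|_{B^\gamma_{p,r}\times B^{\gamma-1}_{p,r}}\lesssim n^{\gamma-s-1}\to 0$.

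Finally, to recover the conclusion at the target regularity $s$, I would combine the low-regularity decay above with an unconditional high-regularity bound at some $\delta\in(s,s+1)$, obtained via the triangle inequality, the solution size estimate from Corollary \ref{corexist}, and the norm estimates $\|(u^{0}_{\omega,n},\rho^0_{\omega,n})\|_{B^\delta_{p,r}\times B^{\delta-1}_{p,r}}\lesssim n^{\delta-s}$. Interpolation (Lemma \ref{besov}(3)) with $\theta=(\delta-s)/(\delta-\gamma)$ then produces $\|\mathcal{E}(t)\|_{B^s_{p,r}\times B^{s-1}_{p,r}}\lesssim n^{-\theta}\to 0$, completing the approximation lemma and hence the theorem. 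The principal technical obstacle is ensuring that the ansatz cancellations and the choice of $\gamma$ together produce a residual small enough that, after the (necessarily Gr\"onwall-amplified) transport estimate and the subsequent interpolation, the exponent $-\theta$ remains strictly negative for all admissible $s>\max\{2+\tfrac1p,\tfrac52\}$.
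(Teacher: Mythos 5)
Your proposal is correct and follows essentially the same route as the paper: the ansatz $u^{\omega,n}=-\tfrac{\omega}{n}+n^{-s}\sin(nx+\omega t)$, $\rho^{\omega,n}=\tfrac{1}{n}+n^{-s}\sin(nx+\omega t)$, the lower bound $\|\cos(nx)\|_{B^\gamma_{p,r}}\gtrsim n^\gamma$, the residual bound $\lesssim n^{\gamma-s-1}$, the transport estimates at an auxiliary $\gamma\in(s-\tfrac32,s-1)$ with Gr\"onwall, and the interpolation against $\delta\in(s,s+1)$ with $\theta=\frac{\delta-s}{\delta-\gamma}$ are exactly the proof of Theorem \ref{mainthm} and Lemma \ref{lemma}. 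The only step you should state explicitly to land on the corollary as phrased is the specialization $p=r=2$ together with the equivalence of norms $B^s_{2,2}(\mathbb{T})\simeq H^s(\mathbb{T})$ (and the observation that $\max\{2+\tfrac12,\tfrac52\}=\tfrac52$), which is precisely how the paper deduces the Sobolev statement in one line from the Besov theorem rather than re-deriving the whole construction.
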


\section*{Appendix}
In this appendix, we provide a lower bound on $\|\cos (nx)\|_{B^{s}_{p,r}}$ for any $s>0$ and $1\leq r < \infty$. By Definition \ref{besovdefn},
\begin{equation}\label{cosbesov}
\|\cos (nx)\|_{B^{s}_{p,r}} = \left(\sum\limits_{q\geq -1} 2^{s q r}\|\Delta_q \cos nx\|^r_{{\bf L}^p}\right)^{\frac{1}{r}}~.
\end{equation}
As $\|\Delta_q \cos(nx)\|_{{\bf L}^p} = \varphi_q(n)$ by  Definition \ref{defnbesov}, where $0 < \varphi_q(n) \leq 1$ for all $q$ such that $\frac{1}{\ln (2)}\ln \left(\frac{3}{8}n\right)\leq q \leq \frac{1}{\ln (2)}\ln \left(\frac{4}{3}n\right)$ and $\varphi_q\left(n\right)= 0$ otherwise, \eqref{cosbesov} implies that
\begin{equation*}
\|\cos (nx)\|_{B^{s}_{p,r}}~\geq~ \left(\sum\limits_{q= \frac{1}{\ln (2)}\ln \left(\frac{3}{8}n\right)}^{ \frac{1}{\ln (2)}\ln \left(\frac{4}{3}n\right)} \left(2^q\right)^{sr}\varphi_q^r(n)\right)^{\frac{1}{r}}~.
\end{equation*}
Since $2^q \geq \frac{3}{8}n$ for all terms in the summation, from the above we have
\begin{equation}\label{app1}
\|\cos (nx)\|_{B^{s}_{p,r}}~\geq~ \left(\frac{3}{8}\right)^{s}n^s\left(\sum\limits_{q= \frac{1}{\ln (2)}\ln \left(\frac{3}{8}n\right)}^{ \frac{1}{\ln (2)}\ln \left(\frac{4}{3}n\right)} \varphi_q^r(n)\right)^{\frac{1}{r}} ~.
\end{equation}
Recall that $\varphi_0(\xi) =  \chi\left(\frac{\xi}{2}\right) - \chi(\xi)$ and $\varphi_q(\xi) = \varphi_0(2^{-q}\xi)$ for any $q>-1$, where $\mathrm{supp}~ \chi = [-\frac{4}{3}, \frac{4}{3}]$ and $\chi = 1$ on $[-\frac{3}{4}, \frac{3}{4}]$. This means that $\mathrm{supp}~\varphi_q = [\frac{3}{4}\cdot 2^q, \frac{8}{3}\cdot 2^q]$ for any $q\geq 1$ and furthermore, $\varphi_q = 1$ on the interval $[\frac{4}{3}\cdot 2^q, \frac{3}{2}\cdot 2^q]$. In other words, $\varphi_q(n) = 1$ for $\frac{1}{\ln (2)}\ln \left(\frac{2}{3}n\right)\leq q \leq \frac{1}{\ln (2)}\ln \left(\frac{3}{4}n\right)$. Therefore, from \eqref{app1} we have
\begin{align*}\label{app2}
\|\cos (nx)\|_{B^{s}_{p,r}}~& \geq~ \left(\frac{3}{8}\right)^{s}n^s\left(\sum\limits_{q= \frac{1}{\ln (2)}\ln \left(\frac{2}{3}n\right)}^{ \frac{1}{\ln (2)}\ln \left(\frac{3}{4}n\right)} 1\right)^{\frac{1}{r}} \nonumber \\
& = \left(\frac{3}{8}\right)^{s}n^s\left(\frac{1}{\ln (2)}\left[\ln \left(\frac{3}{4}n\right) - \ln \left(\frac{2}{3}n\right)\right]\right)^{\frac{1}{r}} \nonumber \\
& = \left(\frac{1}{\ln (2)} \ln \left(\frac{9}{8}\right) \right)^{\frac{1}{r}}\left(\frac{3}{8}\right)^{s}n^s~=~ C_s n^s,
\end{align*}
where $C_s$ is a constant that depends only on $s$, for a given $r$. The same estimate holds for $\|\sin (nx)\|_{B^{s}_{p,r}}$ as well.



\begin{thebibliography}{99}

\bibitem{BCD} H. Bahouri, J. Chemin, R. Danchin  (2011)
Fourier Analysis and Nonlinear Partial Differential Equations, 
\emph{Springer-Verlag, Berlin, Heidelberg}.
\bibitem{D1} R. Danchin (2001)
A few remarks on the Camassa-Holm equation, 
\emph{Differential Integral Equations}, \textbf{14}, no. 8, 953-988.
\bibitem{D2} R. Danchin (2005)
Fourier Analysis Methods for PDE's, 
\emph{Lecture Notes}, https://perso.math.u-pem.fr/danchin.raphael/cours/courschine.pdf.
\bibitem{D} P. Dutta (2024)
Well-posedness of the two-component Fornberg-Whitham system in Besov spaces,
\emph{La Matematica}, https://doi.org/10.1007/s44007-024-00103-3.
\bibitem{FTYY} X. Fan, L. Tian, S. Yang, J. Yin (2011)
Bifurcations of traveling wave solutions for a two-component Fornberg–Whitham equation,
\emph{Commun. Nonlinear Sci. Numer. Simulat.}, \textbf{16}, 3956-3963.
\bibitem{HM} A.A. Himonas, G. Misiolek (2010)
Non-uniform dependence on initial data of solutions to the Euler equations of hydrodynamics,
\emph{Commun. Math. Phys.}, \textbf{296}, 285-301
\bibitem{HT}  J. Holmes, R.C. Thompson (2017)
Well-posedness and continuity properties of the Fornberg-Whitham equation in Besov spaces,
\emph{J. Differential Equations}, \textbf{263}, no. 7, 4355-4381.
\bibitem{HTi} J. Holmes, F. Tiglay (2018)
Non-uniform dependence of the data-to-solution map for the Hunter–Saxton equation in Besov spaces,
\emph{J. Evol. Equ.}, \textbf{18}, 1173-1187.
\bibitem{K} T. Kato (1975)
The Cauchy problem for quasi-linear symmetric hyperbolic systems,
\emph{Arch. Ration. Mech. Anal.}, \textbf{58}, 181-205
\bibitem{KT} B.L. Keyfitz, F. Tiglay (2017)
Non-uniform dependence on initial data for compressible gas dynamics: The periodic Cauchy problem
\emph{J. Differential Equations}, \textbf{263}, no. 10, 6494-6511.
\bibitem{XZL} F. Xu, Y. Zhang, F. Li (2021)
The well-posedness, blow-up, and traveling waves for a two-component Fornberg-Whitham system,
\emph{J. Math. Phys.}, \textbf{62} 041505, no. 4, 1-17.
\bibitem{YL} Y. Yu, J. Li (2022)
Non‑uniform dependence of the data‑to‑solution map for the two‑component Fornberg-Whitham system,
\emph{Ann. Mat. Pura Appl.}, \textbf{202}, no. 4, 59-76.
\end{thebibliography}

\end{document}